\theoremstyle{plain}
\newtheorem{Theorem}{Theorem}
\newtheorem{Proposition}[Theorem]{Proposition}
\theoremstyle{definition}
\begin{document}
\title[Error estimates for alternating series]{Error estimates for the Gregory-Leibniz series and the alternating harmonic series using Dalzell integrals}
\author{Diego Rattaggi}
\email{rattaggi@gmx.ch}
\date{\today}
\begin{abstract}
The computation of Dalzell integrals $\int_0^1 \frac{x^m (1-x)^n}{1+x^2} \, dx > 0$
gives new error estimates for the partial sums of the Gregory-Leibniz series $1 - \frac{1}{3} + \frac{1}{5} - \frac{1}{7} \pm \ldots$ and for the alternating harmonic series
$1 - \frac{1}{2} + \frac{1}{3} - \frac{1}{4} \pm \ldots$
\end{abstract}
\maketitle
\section{Introduction}
Dalzell (\cite{Dalz}) observed that
\[
0 < \int_0^1 \frac{x^4 (1-x)^4}{1+x^2} \, dx =  \frac{22}{7} - \pi.
\]
Backhouse (\cite{Back}) generalized Dalzell's integral to the infinite family
\[
I_{m,n} := \int_0^1 \frac{x^m (1-x)^n}{1+x^2} \, dx \quad \quad  (m,n \in \mathbb{N})
\]
to get better rational approximations of $\pi$, e.g.\
\[
0 < \frac{ I_{32,32}}{16384} 
= \pi - \frac{19809071774292917047896724979}{6305423381881718760060595200} \approx 4 \cdot 10^{-25},\\[5mm]
\]
see also Lucas (\cite{Lucas}).
Moreover, Backhouse showed that the integral $I_{m,n}$ always leads to a rational approximation of~$\pi$, if $2m-n \equiv 0 \pmod 4$. 
Under this condition, we observed by computing several integrals $I_{m,n}$ by hand,
that by fixing an even $n$, we not only get approximations of $\pi$, but also good error estimates for the partial sums of the Gregory-Leibniz series
\[
1 - \frac{1}{3} + \frac{1}{5} - \frac{1}{7} \pm \ldots = \frac{\pi}{4}.
\]
Using $I_{m,n} > 0$, elementary computations immediately lead to an upper and a lower bound for that error.
To illustrate this, we start with the simplest case $n=2$ and $m$ odd.

\section{Estimates for the Gregory-Leibniz series} \label{S2}
If we denote by $GLS_k$ the $k$th partial sum of the Gregory-Leibniz series, i.e.\ 
\[
GLS_k  := \sum_{i=1}^{k} (-1)^{i+1} \frac{1}{2i-1},
\]
then we obtain a first estimate for the error $\left|   \frac{\pi}{4} - GLS_k  \right|$. 

\begin{Proposition} \label{P1}
\[
 \frac{2k+3}{8k^2+12k+4} < \left|   \frac{\pi}{4} - GLS_k  \right| < \frac{1}{4k}
\]
\end{Proposition}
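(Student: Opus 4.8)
The plan is to write the signed remainder of the Gregory--Leibniz series as a single positive integral of Dalzell type and then squeeze it between the two claimed bounds using nothing more than the positivity of the integrals $I_{m,2}$.

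First I would expand the integrand of $\arctan$ by the finite geometric series,
\[
\frac{1}{1+x^2} = \sum_{j=0}^{k-1}(-1)^j x^{2j} + \frac{(-1)^k x^{2k}}{1+x^2},
\]
and integrate over $[0,1]$. Since $\int_0^1 x^{2j}\,dx = \frac{1}{2j+1}$, the polynomial part reproduces $GLS_k$ exactly (reindex by $i=j+1$), leaving
\[
\frac{\pi}{4} - GLS_k = (-1)^k \int_0^1 \frac{x^{2k}}{1+x^2}\,dx.
\]
Writing $J_p := \int_0^1 \frac{x^p}{1+x^2}\,dx > 0$, this identifies the error as $\left|\frac{\pi}{4} - GLS_k\right| = J_{2k}$, so the whole problem reduces to bounding the single positive integral $J_{2k}$.

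For the upper bound I would use the case $n=2$ with $m$ odd. Expanding $(1-x)^2$ and cancelling $x^m(1+x^2)$ against the denominator gives the identity $I_{m,2} = \frac{1}{m+1} - 2J_{m+1}$, so positivity $I_{m,2}>0$ yields $J_{m+1} < \frac{1}{2(m+1)}$. Taking $m=2k-1$ (odd) gives immediately $J_{2k} < \frac{1}{4k}$, which is the desired upper bound.

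For the lower bound the crude estimate is insufficient, and this is the main obstacle: combining $J_{2k}+J_{2k+2}=\frac{1}{2k+1}$ with the monotonicity $J_{2k+2}<J_{2k}$ only gives $J_{2k} > \frac{1}{4k+2}$, which is weaker than the claim (after clearing denominators it corresponds to $2k+2$ rather than $2k+3$ in the numerator). To sharpen it I would instead use the reduction identity $J_{2k}+J_{2k+2}=\int_0^1 x^{2k}\,dx=\frac{1}{2k+1}$ (obtained by restoring the factor $1+x^2$ in the numerator) together with an \emph{upper} bound on $J_{2k+2}$: applying the previous step with $m=2k+1$ (again odd) gives $J_{2k+2} < \frac{1}{4(k+1)}$, whence
\[
J_{2k} = \frac{1}{2k+1} - J_{2k+2} > \frac{1}{2k+1} - \frac{1}{4(k+1)} = \frac{2k+3}{8k^2+12k+4}.
\]
The remaining work is the routine algebraic simplification of this difference and the bookkeeping that both chosen exponents $m=2k-1$ and $m=2k+1$ are odd, as required by the $n=2$ construction.
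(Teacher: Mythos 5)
Your proof is correct, and while it runs on the same engine as the paper --- positivity of $I_{m,2}$ for precisely the two choices $m=2k-1$ (upper bound) and $m=2k+1$ (lower bound) --- you organize the bookkeeping in a genuinely different way. The paper performs the full polynomial long division of $x^m(1-x)^2$ by $1+x^2$, integrates to get an identity of the shape $\tfrac{I_{m,2}}{2} = \tfrac{1}{2(m+1)} - \tfrac{1}{m} + \tfrac{1}{m-2} - \cdots + 1 - \tfrac{\pi}{4}$, and must then split into the cases $m\equiv 1$ and $m\equiv 3 \pmod 4$ to control the sign pattern and remove the absolute value; its lower bound comes from rearranging the very same inequality by isolating the term $\tfrac{1}{m}$. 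You instead first establish the classical remainder representation $\left|\tfrac{\pi}{4} - GLS_k\right| = J_{2k} = \int_0^1 \tfrac{x^{2k}}{1+x^2}\,dx$ via the finite geometric series, which settles the sign and the absolute value once and for all and eliminates the mod-$4$ case analysis; after that, the two compact identities $I_{m,2} = \tfrac{1}{m+1} - 2J_{m+1}$ and $J_{2k}+J_{2k+2} = \tfrac{1}{2k+1}$ do all the remaining work, and your remark that the naive monotonicity $J_{2k+2} < J_{2k}$ only gives the weaker $\tfrac{1}{4k+2}$ correctly pinpoints why a second application of Dalzell positivity (at $m=2k+1$) is genuinely needed. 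Your modular route is cleaner and more transparent for this proposition; what the paper's heavier expansion buys in return is that it scales directly to the higher cases $n=4,6,8$ (Propositions \ref{P2}--\ref{P4}), where the explicit division of $x^m(1-x)^n$ by $1+x^2$ is exactly what produces the more elaborate bounds, whereas your $J_p$-identities would need to be generalized accordingly.
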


\begin{proof}
To prove the upper bound, let $n = 2$ and $m \equiv 3 \pmod 4$. Then
\begin{align}
\frac{ I_{m,2}}{2}  &= \frac{1}{2} \int_0^1 \frac{x^m (1-x)^2}{1+x^2} \, dx = \frac{1}{2} \int_0^1 \frac{x^{m+2}- 2x^{m+1}+x^m}{1+x^2} \, dx \notag \\
&= \frac{1}{2} \int_0^1  x^m -2x^{m-1} +2x^{m-3} - 2x^{m-5} \pm \ldots  + 2 - \frac{2}{1+x^2} \, dx \notag \\
&= \left[ \frac{x^{m+1}}{2(m+1)} - \frac{x^m}{m} + \frac{x^{m-2}}{m-2} -\frac{x^{m-4}}{m-4} \pm \ldots + x - \arctan(x)\right]_0^1 \notag \\
&= \frac{1}{2(m+1)} - \frac{1}{m} + \frac{1}{m-2} - \frac{1}{m-4} \pm \ldots + 1 - \frac{\pi}{4} \notag 
\end{align}
Since obviously $I_{m,n} > 0$, we get
\[
\frac{\pi}{4} - \left( 1- \frac{1}{3} + \frac{1}{5} - \frac{1}{7} \pm \ldots - \frac{1}{m} \right) < \frac{1}{2(m+1)}
\]
and with $m = 2k-1$
\[
\frac{\pi}{4} - GLS_k < \frac{1}{4k}\\[5mm]
\]
In the other case $n=2$ and $m \equiv 1 \pmod 4$, we get in the same way
\[
\left( 1- \frac{1}{3} + \frac{1}{5} - \frac{1}{7} \pm \ldots + \frac{1}{m} \right) - \frac{\pi}{4} < \frac{1}{2(m+1)}
\]
and
\[
GLS_k - \frac{\pi}{4} <  \frac{1}{4k}
\]
The computation for the upper bound also immediately leads to a lower bound by separating the last summand $\frac{1}{m}$ from $1 -  \frac{1}{3} + \ldots $.
More precisely, we have 
in the case $m \equiv 3 \pmod 4$ as seen before
\[
\frac{1}{2(m+1)} - \frac{1}{m} + \frac{1}{m-2} - \frac{1}{m-4} \pm \ldots + 1 - \frac{\pi}{4} > 0
\]
hence
\[
\frac{1}{m-2} - \frac{1}{m-4} \pm \ldots + 1 - \frac{\pi}{4} > \frac{1}{m} - \frac{1}{2(m+1)}
\]
and
\[
\left( 1- \frac{1}{3} \pm \ldots - \frac{1}{m-4} + \frac{1}{m-2} \right)  - \frac{\pi}{4} > \frac{1}{m} - \frac{1}{2(m+1)} 
\]
The sum in brackets has $k= \frac{m-1}{2}$ summands, therefore we replace $m$ by $2k+1$ and get
\[
GLS_k - \frac{\pi}{4} > \frac{1}{2k+1}-\frac{1}{2(2k+2)} = \frac{2k+3}{8k^2+12k+4}
\]
In the second case $m \equiv 1 \pmod 4$, we similarly get
\[
\frac{1}{m} - \frac{1}{2(m+1)} < \frac{\pi}{4} - \left( 1- \frac{1}{3}  \pm \ldots + \frac{1}{m-4}  - \frac{1}{m-2} \right)
\]
and the claim follows.
\end{proof}

Proposition~\ref{P1} can be improved increasing $n$ to $4$.
\begin{Proposition} \label{P2}
\[
\frac{1}{4} \left( -\frac{1}{2k+5}+\frac{4}{2k+4}- \frac{5}{2k+3} + \frac{4}{2k+1} \right) < \left|   \frac{\pi}{4} - GLS_k  \right| < \frac{1}{4} \left( \frac{1}{2k+3} - \frac{4}{2k+2} + \frac{5}{2k+1}  \right)
\]
or equivalently
\[
\frac{4k^3+26k^2+58k+47}{16k^4+104k^3+236k^2+214k+60} < \left|   \frac{\pi}{4} - GLS_k  \right| < \frac{2k^2+6k+5}{8k^3+24k^2+22k+6}
\]
\end{Proposition}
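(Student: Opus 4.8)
The plan is to mirror the proof of Proposition~\ref{P1}, but with the exponent $n=4$ in place of $n=2$, extracting a single clean identity that simultaneously yields both bounds. The key algebraic observation is that $(1-x)^4 = \bigl((1+x^2)-2x\bigr)^2 = (1+x^2)^2 - 4x(1+x^2) + 4x^2$, so that after dividing by $1+x^2$ and writing $4x^2 = 4(1+x^2)-4$ one obtains the pointwise identity
\[
\frac{x^m(1-x)^4}{1+x^2} = x^{m+2} - 4x^{m+1} + 5x^m - \frac{4x^m}{1+x^2}.
\]
Integrating over $[0,1]$ collapses the whole problem to the single integral $\int_0^1 \frac{x^m}{1+x^2}\,dx$, giving
\[
I_{m,4} = \frac{1}{m+3} - \frac{4}{m+2} + \frac{5}{m+1} - 4\int_0^1 \frac{x^m}{1+x^2}\,dx.
\]

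Next I would evaluate that integral for even $m=2\ell$ (Backhouse's condition $2m-n\equiv 0 \pmod 4$ forces $m$ even when $n=4$). Expanding $\frac{x^{2\ell}}{1+x^2}$ as a geometric-type sum in $x^2$ with remainder $\frac{(-1)^\ell}{1+x^2}$ and integrating termwise gives, after recognizing the resulting alternating partial sum,
\[
\int_0^1 \frac{x^{2\ell}}{1+x^2}\,dx = (-1)^{\ell-1}\Bigl(GLS_\ell - \frac{\pi}{4}\Bigr).
\]
Substituting this back produces the master identity
\[
I_{m,4} = \frac{1}{2\ell+3} - \frac{4}{2\ell+2} + \frac{5}{2\ell+1} - 4(-1)^{\ell-1}\Bigl(GLS_\ell - \frac{\pi}{4}\Bigr),
\]
and since $(-1)^{\ell-1}\bigl(GLS_\ell - \tfrac{\pi}{4}\bigr) = \bigl|\tfrac{\pi}{4} - GLS_\ell\bigr|$ for every $\ell$, the positivity $I_{m,4}>0$ immediately gives the upper bound of the proposition upon setting $k=\ell$.

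For the lower bound I would apply the same identity one step further out, at $\ell = k+1$, and use the elementary relation $GLS_{k+1} = GLS_k + (-1)^k\frac{1}{2k+1}$ to re-express $GLS_{k+1}-\frac{\pi}{4}$ in terms of $GLS_k-\frac{\pi}{4}$. Feeding this into $I_{2k+2,4}>0$ and again absorbing the sign into $\bigl|\tfrac{\pi}{4} - GLS_k\bigr|$ converts the positivity into the claimed lower bound; this is exactly the ``separate the last summand'' device of Proposition~\ref{P1}, now applied at index $k+1$. The two rational reformulations follow by placing each bracket over a common denominator, a purely mechanical step. I expect the only genuine obstacle to be the bookkeeping of signs: one must check that the factor $(-1)^{\ell-1}$ from $\int_0^1 x^{2\ell}/(1+x^2)\,dx$ always cooperates with the parity-dependent sign of $\tfrac{\pi}{4} - GLS_\ell$, so that both inequalities hold uniformly in $k$ rather than only for one residue class of $m$ modulo $4$. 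The master identity above makes this transparent, so once it is in hand the remaining steps are routine.
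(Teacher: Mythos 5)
Your proposal is correct and follows essentially the same route as the paper: the same decomposition $\frac{(1-x)^4}{1+x^2} = x^2-4x+5-\frac{4}{1+x^2}$, positivity of $I_{2k,4}$ for the upper bound, and positivity of $I_{2k+2,4}$ combined with splitting off the last summand (your recursion $GLS_{k+1}=GLS_k+(-1)^k\frac{1}{2k+1}$) for the lower bound. The only difference is organizational: your master identity $\int_0^1 \frac{x^{2\ell}}{1+x^2}\,dx = (-1)^{\ell-1}\bigl(GLS_\ell-\frac{\pi}{4}\bigr)$ treats both residues of $m$ modulo $4$ uniformly (and, via positivity of that integral, even yields the sign fact $(-1)^{\ell-1}\bigl(GLS_\ell-\frac{\pi}{4}\bigr)=\bigl|\frac{\pi}{4}-GLS_\ell\bigr|$ for free), whereas the paper writes out the two parity cases separately.
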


\begin{proof}
Let $n=4$ and let $m$ be even (such that $2m-n \equiv 0 \pmod 4$).
\begin{align}
\frac{ I_{m,4}}{4}  &=  \int_0^1 \frac{x^{m} (1-x)^4}{4(1+x^2)} \, dx 
= \int_0^1 \frac{x^{m}}{4} \cdot \frac{x^4 - 4x^3 + 6x^2 - 4x + 1}{1+x^2}  \, dx \notag \\
&=  \int_0^1 \frac{x^{m}}{4} \cdot \left(x^2-4x+5 - \frac{4}{1+x^2}\right)  \, dx 
= \int_0^1 \frac{1}{4} \left( x^{m+2}-4x^{m+1}+5x^m\right) - \frac{x^{m}}{1+x^2}   \, dx \notag 
\end{align}

If $m \equiv 2 \pmod 4$, the computation continues like
\begin{align}
&=  \int_0^1 \frac{1}{4} \left(x^{m+2}-4x^{m+1}+5x^m \right) - x^{m-2} + x^{m-4} - x^{m-6}\pm \ldots - 1 + \frac{1}{1+x^2}   \, dx \notag \\
&= \left[ \frac{1}{4} \left( \frac{x^{m+3}}{m+3}-\frac{4x^{m+2}}{m+2}+ \frac{5x^{m+1}}{m+1}\right) - \frac{x^{m-1}}{m-1} + \frac{x^{m-3}}{m-3} - \frac{x^{m-5}}{m-5} \pm \ldots - x + \arctan(x) \right]_0^1 \notag \\
&= \frac{1}{4} \left( \frac{1}{m+3}-\frac{4}{m+2}+ \frac{5}{m+1}\right) - \frac{1}{m-1} + \frac{1}{m-3} - \frac{1}{m-5}  \pm \ldots - 1 + \frac{\pi}{4} > 0 \notag
\end{align}

Therefore
\[
\left( 1 - \frac{1}{3} \pm \ldots + \frac{1}{m-1} \right) - \frac{\pi}{4} <  \frac{1}{4} \left( \frac{1}{m+3}-\frac{4}{m+2}+ \frac{5}{m+1} \right)
\]
In the other case $m \equiv 0 \pmod 4$, we similarly get
\[
\frac{\pi}{4} - \left( 1 - \frac{1}{3} \pm \ldots - \frac{1}{m-1} \right) 
< \frac{1}{4} \left( \frac{1}{m+3}-\frac{4}{m+2}+ \frac{5}{m+1} \right)
\]
The substitution $m=2k$ completes the proof for the upper bound.

To get the lower bound, we write in the case $m \equiv 2 \pmod 4$
\[
\frac{1}{m-3} - \frac{1}{m-5}  \pm \ldots - 1 + \frac{\pi}{4} > 
\frac{1}{m-1} - \frac{1}{4} \left( \frac{1}{m+3}-\frac{4}{m+2}+ \frac{5}{m+1}\right)
\]
hence
\[
\frac{1}{4} \left( -\frac{1}{m+3}+\frac{4}{m+2}- \frac{5}{m+1} + \frac{4}{m-1} \right) < \frac{\pi}{4} - \left( 1 - \frac{1}{3} \pm \ldots - \frac{1}{m-3} \right) 
\]
The substitution $m=2k+2$ gives
\[
\frac{1}{4} \left( -\frac{1}{2k+5}+\frac{4}{2k+4}- \frac{5}{2k+3} + \frac{4}{2k+1} \right) < \frac{\pi}{4} - GLS_k 
\]
In the remaining case $m \equiv 0 \pmod 4$, we obtain in the same way
\[
\frac{1}{4} \left( -\frac{1}{2k+5}+\frac{4}{2k+4}- \frac{5}{2k+3} + \frac{4}{2k+1} \right) < GLS_k - \frac{\pi}{4}
\]
\end{proof}

These results can be further improved by taking 
$n=6$, $m$ odd (Proposition \ref{P3}),
$n=8$, $m$ even (Proposition \ref{P4}), and so on.
Their proofs would use exactly the same ideas as the proof of Proposition \ref{P2}.

\begin{Proposition} \label{P3}
\begin{align}
\left|   \frac{\pi}{4} - GLS_k  \right| &<  
\frac{1}{8} \left(
\frac{1}{2k+6} - \frac{6}{2k+5} + \frac{14}{2k+4} - \frac{14}{2k+3} + \frac{1}{2k+2} + \frac{8}{2k+1}   \right)
\notag \\
&= \frac{16k^5+168k^4+696k^3+1428k^2+1454k+567}{64k^6+672k^5+2800k^4+5880k^3+6496k^2+3528k+720}\notag
\end{align}

and

\begin{align}
\left|   \frac{\pi}{4} - GLS_k  \right| &> \frac{1}{8} \left( - \frac{1}{2k+4} + \frac{6}{2k+3} - \frac{14}{2k+2} + \frac{14}{2k+1} - \frac{1}{2k}   \right) \notag \\
&= \frac{8k^4+40k^3+68k^2+40k-3}{32k^5+160k^4+280k^3+200k^2+48k} \notag
\end{align}
\end{Proposition}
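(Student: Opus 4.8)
The plan is to follow the template of Proposition~\ref{P2} verbatim, now with $n=6$ and $m$ odd (so that $2m-n\equiv 0\pmod 4$). The one genuinely new ingredient is the division by $1+x^2$. Reducing modulo $x^2+1$, i.e.\ setting $x^2\equiv -1$, gives $(1-x)^6\equiv 8x$, and dividing out the quotient yields
\[
\frac{(1-x)^6}{1+x^2}=x^4-6x^3+14x^2-14x+1+\frac{8x}{1+x^2}.
\]
Multiplying by $x^m/8$ and integrating therefore splits $I_{m,6}/8$ into the elementary polynomial integral
\[
\frac18\left(\frac1{m+5}-\frac6{m+4}+\frac{14}{m+3}-\frac{14}{m+2}+\frac1{m+1}\right)
\]
plus the single arctangent-type integral $\int_0^1 \frac{x^{m+1}}{1+x^2}\,dx$. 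The crucial difference from Proposition~\ref{P2} is that, because the remainder is $8x$ rather than a constant, it is $x^{m+1}$ and not $x^m$ that survives, which shifts the relevant Gregory--Leibniz index by one.

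Next I would expand $\frac{x^{m+1}}{1+x^2}$ as a finite alternating polynomial plus $\pm\frac1{1+x^2}$ and integrate term by term, splitting into the cases $m\equiv 3$ and $m\equiv 1\pmod 4$ exactly as in Proposition~\ref{P2}. In both cases this identifies $\int_0^1 \frac{x^{m+1}}{1+x^2}\,dx$ with $\pm\bigl(\frac\pi4-GLS_{(m+1)/2}\bigr)$, so that the inequality $I_{m,6}>0$ becomes a statement about $\frac\pi4-GLS_{(m+1)/2}$.

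For the lower bound I would substitute $m=2k-1$, so that $(m+1)/2=k$ and $I_{m,6}>0$ reads directly $\bigl|\frac\pi4-GLS_k\bigr|>-\frac18(\dots)_{m=2k-1}$; since the polynomial bracket is negative at $m=2k-1$, negating it gives exactly the claimed positive lower bound. For the upper bound that substitution is useless, because the bracket is then negative and cannot bound an absolute value from above; instead I would take $m=2k+1$, obtaining a bound on $\frac\pi4-GLS_{k+1}$, and then separate off the single term $\frac1{2k+1}$ to pass from $GLS_{k+1}$ to $GLS_k$, exactly the ``separating the last summand'' device used for the lower bound in Proposition~\ref{P2}. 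Because an exact term is added, the estimate is not loosened, and the extra $\frac1{2k+1}=\frac18\cdot\frac8{2k+1}$ reproduces precisely the additional $\frac8{2k+1}$ inside the claimed upper bracket.

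Finally, the two displayed closed rational forms follow by putting each bracket over a common denominator, which is purely mechanical. The main obstacle is not any single computation but the sign and index bookkeeping: one must verify in each residue class $m\equiv 1,3\pmod 4$ that $\bigl|\frac\pi4-GLS_k\bigr|$ opens with the correct sign, that the lower bound uses $m=2k-1$ while the upper bound uses $m=2k+1$, and that the separated term carries the sign needed to land on $+\frac8{2k+1}$ rather than $-\frac8{2k+1}$. The polynomial division and the final common-denominator simplification, although tedious, are routine and easily checked numerically (e.g.\ both bounds bracket $\bigl|\frac\pi4-GLS_1\bigr|=1-\frac\pi4$ at $k=1$).
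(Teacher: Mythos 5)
Your proposal is correct and is precisely the argument the paper intends: the paper gives no separate proof of Proposition~\ref{P3}, saying only that it uses ``exactly the same ideas as the proof of Proposition~\ref{P2}'', and your reduction $(1-x)^6 \equiv 8x \pmod{1+x^2}$, the resulting shift to $x^{m+1}$, and the swap of roles (direct substitution $m=2k-1$ now yields the \emph{lower} bound because the remainder $8x$ is positive, while the upper bound needs $m=2k+1$ plus separating the term $\frac{1}{2k+1}$) are exactly how that template plays out for $n=6$, $m$ odd. Your sign and index bookkeeping, including the observation that the positivity of the lower bound simultaneously fixes the sign of $\frac{\pi}{4}-GLS_k$ needed for the absolute values, checks out.
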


\begin{Proposition} \label{P4}
\[
\left|   \frac{\pi}{4} - GLS_k  \right| < 
\frac{1}{16} \left( \frac{1}{2k+9} - \frac{8}{2k+8} + \frac{27}{2k+7} - \frac{48}{2k+6} + \frac{43}{2k+5} - \frac{8}{2k+4} - \frac{15}{2k+3} + \frac{16}{2k+1} \right) 
\]
\[
= \frac{16k^7+344k^6+3132k^5+15678k^4+46730k^3+83320k^2+82854k+35631}{64k^8+1376k^7+12544k^6+63056k^5+190036k^4+348614k^3+375066k^2+211284k+45360} \\[2mm]
\]
and
\[
\left|   \frac{\pi}{4} - GLS_k  \right| > \frac{1}{16} \left( -\frac{1}{2k+7} + \frac{8}{2k+6} - \frac{27}{2k+5} + \frac{48}{2k+4} - \frac{43}{2k+3} + \frac{8}{2k+2} + \frac{15}{2k+1}  \right) 
\]
\[
= \frac{8k^6+112k^5+642k^4+1932k^3+3226k^2+2828k+981}{32k^7+448k^6+2576k^5+7840k^4+13538k^3+13132k^2+6534k+1260}
\]
\end{Proposition}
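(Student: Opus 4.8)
The plan is to run the argument of Proposition~\ref{P2} once more, now with $n=8$ and $m$ even (which is what $2m-n\equiv 0\pmod 4$ forces here), reading both inequalities off from the single positive number $I_{m,8}/16$.

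The first step is the polynomial division of $(1-x)^8$ by $1+x^2$. What survives modulo $1+x^2$ is determined by the value at $x=i$, and since $(1-i)^8=\bigl((1-i)^2\bigr)^4=(-2i)^4=16$ is real, the remainder is the \emph{constant} $16$; carrying out the division gives
\[
(1-x)^8=(1+x^2)\,Q(x)+16,\qquad Q(x)=x^6-8x^5+27x^4-48x^3+43x^2-8x-15 .
\]
(I would confirm $Q$ by evaluating both sides at, say, $x=0,1,2$.) The decisive feature is that this remainder is $+16$, in contrast with the $-4$ obtained for $n=4$ in Proposition~\ref{P2}; the sign of the remainder is exactly what governs whether positivity of the Dalzell integral hands over the upper or the lower estimate directly.

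Next I would write
\[
\frac{I_{m,8}}{16}=\frac{1}{16}\int_0^1 x^m Q(x)\,dx+\int_0^1\frac{x^m}{1+x^2}\,dx ,
\]
integrate $x^m Q(x)$ term by term, and expand $\tfrac{x^m}{1+x^2}=x^{m-2}-x^{m-4}+\cdots$ exactly as in Proposition~\ref{P2}. For $m$ even the $\arctan$ term contributes $\pm\pi/4$ and the surviving finite sum is $\pm GLS_{m/2}$, the signs dictated by $m\bmod 4$. For $m\equiv 2\pmod 4$, $I_{m,8}>0$ then reads
\[
\frac{1}{16}\,P_m+GLS_{m/2}-\frac{\pi}{4}>0,\qquad P_m:=\frac{1}{m+7}-\frac{8}{m+6}+\frac{27}{m+5}-\frac{48}{m+4}+\frac{43}{m+3}-\frac{8}{m+2}-\frac{15}{m+1}.
\]
Because the $+16$ remainder makes $GLS_{m/2}$ enter with a plus sign, positivity yields the \emph{lower} bound immediately, $GLS_{m/2}-\tfrac{\pi}{4}>-\tfrac{1}{16}P_m$, and the substitution $m=2k$ renders $\tfrac{1}{16}(-P_{2k})$ as the asserted lower bound. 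For the \emph{upper} bound I would peel off the largest term $\tfrac{1}{m-1}$ of the arctangent expansion (the separation trick used for the \emph{lower} bound in Proposition~\ref{P2}, its role now reversed): writing $GLS_{m/2}=GLS_{m/2-1}+\tfrac{1}{m-1}$ and using positivity gives $\tfrac{\pi}{4}-GLS_{m/2-1}<\tfrac{1}{16}\bigl(P_m+\tfrac{16}{m-1}\bigr)$, whereupon $m=2k+2$ yields the asserted upper bound, with the peeled-off $\tfrac{1}{m-1}$ becoming the trailing $+\tfrac{16}{2k+1}$.

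The only real work is sign bookkeeping. One must keep the alternating signs of $Q$, of the expansion of $x^m/(1+x^2)$, and of the $\arctan$ contribution mutually consistent, and track the complementary class $m\equiv 0\pmod 4$: there the overall sign flips, so that direct positivity and the separation trick exchange which of the two orientations of $GLS_k-\tfrac{\pi}{4}$ they control, but each substitution still lands on $\lvert\pi/4-GLS_k\rvert$ with the correct inequality, together covering all $k$. Finally, rewriting each bracketed expression over a common denominator to reach the displayed ratios of polynomials in $k$ is routine and is best verified with a computer algebra system.
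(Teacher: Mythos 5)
Your proposal is correct and is exactly what the paper intends: the paper gives no separate proof of Proposition~\ref{P4}, saying only that it ``would use exactly the same ideas as the proof of Proposition~\ref{P2},'' and you carry out precisely that argument with $n=8$, $m$ even. Your decomposition $(1-x)^8=(1+x^2)Q(x)+16$ with $Q(x)=x^6-8x^5+27x^4-48x^3+43x^2-8x-15$ is right, and the two substitutions $m=2k$ and $m=2k+2$ reproduce the stated bounds exactly, including the key point that the remainder $+16$ (versus $-4$ for $n=4$) swaps which bound follows directly from positivity and which needs the peeling-off step.
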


\section{Comparison with other estimates}
We compare some error estimates for general alternating series with our estimates.
The orignal error estimate coming from the Leibniz criterion for alternating series leads to
\[
\left|   \frac{\pi}{4} - GLS_k  \right| \leq \frac{1}{2k+1}
\]
This was improved by Calabrese (\cite{Cala}).
For the Gregory-Leibniz series, it gives
\[
\frac{1}{4k+2} < \left|   \frac{\pi}{4} - GLS_k  \right| < \frac{1}{4k-2}
\]
This result was again refined by Johnsonbaugh (\cite{John}):
Let $a_1-a_2+a_3-a_4 \pm \ldots$ be an alternating series.
Define 
\[
\Delta^1 a_k := a_k - a_{k+1} \quad \text{ and } \quad  \Delta^r a_k := \Delta^{r-1} a_k -  \Delta^{r-1} a_{k+1}
\]
for $r>1$. If all the sequences $(\Delta^r a_k)$ for $r=1, 2, 3, \ldots, j$ decrease monotonically to zero,
then Johnsonbaugh showed for the error $R_k$, that
\[
\frac{a_{k+1}}{2} + \frac{\Delta^1 a_{k+1}}{2^2} + \ldots +  \frac{\Delta^j a_{k+1}}{2^{j+1}} < |R_k| < \frac{a_k}{2} - \left(   \frac{\Delta^1 a_k}{2^2} + \ldots +  \frac{\Delta^j a_k}{2^{j+1}} \right),
\] 
see \cite[Theorem 3]{Villa}.
For the Gregory-Leibniz series, this gives for example
\[
a_k = \frac{1}{2k-1}
\]
\[
\Delta^1 a_k = a_k - a_{k+1} =  \frac{1}{2k-1} -  \frac{1}{2k+1} = \frac{2}{4k^2-1}
\]
and
\[
\Delta^2 a_k = \Delta^1 a_k -  \Delta^1 a_{k+1} = \frac{1}{2k-1} -  \frac{1}{2k+1} - \frac{1}{2k+1} +  \frac{1}{2k+3} = \frac{8}{8k^3+12k^2-2k-3}
\]
So, we obtain for $j=1$
\[
\frac{1}{2(2k+1)} + \frac{1}{4(2k+1)} - \frac{1}{4(2k+3)} <  \left|   \frac{\pi}{4} - GLS_k  \right| < \frac{1}{2(2k-1)} - \frac{1}{4(2k-1)} + \frac{1}{4(2k+1)}  
\]
hence
\[
\frac{k+2}{4k^2+8k+3} <  \left|   \frac{\pi}{4} - GLS_k  \right| < \frac{k}{4k^2-1}
\]
It is easy to check, that these bounds are worse than the bounds of Proposition \ref{P1}.

Similarly, we get for $j=2$ 
\[
\frac{2k^2+9k+11}{8k^3+36k^2+46k+15}<  \left|   \frac{\pi}{4} - GLS_k  \right| < \frac{2k^2+3k-1}{8k^3+12k^2-2k-3}
\]
These bounds are worse than the bounds of Proposition \ref{P2}. For example comparing the two upper bounds we have
\[
 \frac{2k^2+6k+5}{8k^3+24k^2+22k+6} < \frac{2k^2+3k-1}{8k^3+12k^2-2k-3}
\]
since
\[
(2k^2+3k-1)(8k^3+24k^2+22k+6) - (2k^2+6k+5)(8k^3+12k^2-2k-3) = 12k^2 + 24k +9
\]
is always positive.

The following two tables show some numerical comparisons for the different error estimates 
(our propositions and Johnsonbaughs error estimates up to $j=5$), taking $k=10$ and $k=20$.

\begin{table}[ht]
\begin{tabular}{l|c|c}
 & $k=10$ & $k=20$ \\ 
Leibniz & 0.047619047619 & 0.024390243902\\
Calabrese & 0.026315789474 & 0.012820512821\\
Johnsonbaugh ($j=1$) & 0.025062656642 & 0.012507817386\\
Proposition \ref{P1} & 0.025000000000 & 0.012500000000 \\
Johnsonbaugh ($j=2$) & 0.024953688569 & 0.012493273412 \\
Johnsonbaugh ($j=3$) & 0.024940612401 & 0.012492303814 \\
Proposition \ref{P2} & 0.024938829287 & 0.012492234557 \\
Johnsonbaugh ($j=4$) & 0.024938675190 & 0.012492221295\\
Johnsonbaugh ($j=5$) & 0.024938341189 & 0.012492212875\\
Proposition \ref{P3} & 0.024938268253 & 0.012492211870\\
Proposition \ref{P4} & 0.024938258893 & 0.012492211732\\
True error & $0.024938258665$ & 0.012492211731 \\
\end{tabular}
\caption{Upper bounds for $k=10$ and $k=20$} 
\label{Table1}
\end{table}

\begin{table}[ht]
\begin{tabular}{l|c|c}
 & $k=10$ & $k=20$ \\ 
Calabrese & 0.023809523810 & 0.012195121951 \\
Johnsonbaugh ($j=1$) & 0.024844720497 & 0.012478729438 \\
Proposition \ref{P1} & 0.024891774892 & 0.012485481998 \\
Johnsonbaugh ($j=2$) & 0.024927536232 & 0.012491334216 \\
Johnsonbaugh ($j=3$) & 0.024936737980 & 0.012492138776 \\
Proposition \ref{P2} & 0.024937888199 & 0.012492193632 \\
Johnsonbaugh ($j=4$) & 0.024938007187 & 0.012492204454 \\
Johnsonbaugh ($j=5$) & 0.024938211898 & 0.012492210893 \\
Proposition \ref{P3} & 0.024938241107 & 0.012492211537 \\
Proposition \ref{P4} & 0.024938258199 & 0.012492211728 \\
True error & $0.024938258665$ & 0.012492211731 \\
\end{tabular}
\caption{Lower bounds for $k=10$ and $k=20$} 
\label{Table2}
\end{table}

\section{Related series}
As observed by Backhouse (\cite{Back}), the integral $I_{m,n}$ leads to a rational approximation of $\ln(2)$, if $2m-n \equiv 2 \pmod 4$.
In these cases, we now directly get error estimates for the series
\[
\ln(\sqrt{2}) = \frac{1}{2} - \frac{1}{4} + \frac{1}{6} - \frac{1}{8} \pm \ldots 
\]
Indeed, all the computations done in Section \ref{S2} work analogously here, replacing 
\[
\int_{0}^{1} \frac{1}{1+x^2} \, dx \quad \text{ by } \quad \int_{0}^{1} \frac{x}{1+x^2} \, dx
\]
hence replacing $\arctan(x)$ by $\frac{1}{2} \ln(1+x^2)$ and therefore replacing $\frac{\pi}{4}$ by $\frac{1}{2} \ln(2) = \ln(\sqrt{2})$.
In the simplest case $n=2$, $m$ even, we obtain
\[
\quad \left|  \ln(\sqrt{2})  - \left(  \frac{1}{2} - \frac{1}{4} + \frac{1}{6} - \frac{1}{8} \pm \ldots  \pm \frac{1}{m} \right)  \right| < \frac{1}{2(m+1)}
\]
and
\[
\quad \left|  \ln(\sqrt{2})  - \left(  \frac{1}{2} - \frac{1}{4} + \frac{1}{6} - \frac{1}{8} \pm \ldots  \pm \frac{1}{m-2} \right)  \right| > \frac{1}{m} - \frac{1}{2m+1},
\]
cf.\ proof of Proposition~\ref{P1}.
Using now the substitutions $m=2k$ and $m=2k+2$, respectively,
we get
\[
 \frac{2k+3}{8k^2+18k+10} < \left|  \ln(\sqrt{2}) - S_k  \right| < \frac{1}{4k+2},
\]
where $S_k$ denotes the $k$th partial sum
\[
S_k  := \sum_{i=1}^{k} (-1)^{i+1} \frac{1}{2i}.
\]
As in Section \ref{S2}, increasing $n$ improves the estimates,
e.g.\ $n=4$, $m$ odd, gives
\[
\frac{1}{4} \left( -\frac{1}{2k+6}+\frac{4}{2k+5}-\frac{5}{2k+4}+\frac{4}{2k+2} \right) < \left|  \ln(\sqrt{2}) - S_k  \right| < \frac{1}{4} \left( \frac{1}{2k+4} - \frac{4}{2k+3} + \frac{5}{2k+2} \right)
\]
or equivalently
\[
\frac{4k^3+32k^2+87k+83}{16k^4+136k^3+416k^2+536k+240} < \left|  \ln(\sqrt{2}) - S_k  \right| < \frac{4k^2+16k+17}{16k^3+72k^2+104k+48}.
\]
Multiplying the inequalities by 2, we now easily get error estimates for the alternating harmonic series
\[
\ln(2) = 1 - \frac{1}{2} + \frac{1}{3} - \frac{1}{4} \pm \ldots 
\]
Denoting by $AHS_k$ its $k$th partial sum, we conclude

\begin{Proposition} 
\[
 \frac{2k+3}{4k^2+9k+5} < \left|  \ln(2) - AHS_k  \right| < \frac{1}{2k+1},
\]
\end{Proposition}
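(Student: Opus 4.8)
The plan is to reduce the claim to the estimate for $\ln(\sqrt{2})$ that was just established, exploiting the elementary relation between the two series. First I would observe that the alternating harmonic series is obtained from the series for $\ln(\sqrt{2})$ simply by doubling every term: since
\[
AHS_k = \sum_{i=1}^{k} (-1)^{i+1} \frac{1}{i} = 2 \sum_{i=1}^{k} (-1)^{i+1} \frac{1}{2i} = 2 S_k
\]
and $\ln(2) = 2 \ln(\sqrt{2})$, it follows at once that
\[
\left| \ln(2) - AHS_k \right| = 2 \left| \ln(\sqrt{2}) - S_k \right|.
\]

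Next I would invoke the bounds
\[
\frac{2k+3}{8k^2+18k+10} < \left| \ln(\sqrt{2}) - S_k \right| < \frac{1}{4k+2}
\]
proved just above, multiply the entire chain of inequalities by $2$, and simplify the resulting fractions. The upper bound becomes $\frac{2}{4k+2} = \frac{1}{2k+1}$, and the lower bound becomes $\frac{2(2k+3)}{8k^2+18k+10} = \frac{2k+3}{4k^2+9k+5}$ after cancelling the common factor $2$ from numerator and denominator. Combining this with the identity above gives precisely the asserted inequality for $\left| \ln(2) - AHS_k \right|$.

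Because the two series differ only by the global factor $2$, there is essentially no obstacle here: the full content of the proposition is carried by the previously established $\ln(\sqrt{2})$ estimate, which in turn rests on the positivity of the Dalzell integral $I_{m,2}$ via the argument already used in the proof of Proposition~\ref{P1}. The only points that deserve a moment of attention are the termwise identity $AHS_k = 2 S_k$ and the two fraction simplifications, both of which are routine.
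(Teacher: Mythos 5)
Your proof is correct and follows essentially the same route as the paper: the paper likewise derives the bounds for $\left|\ln(\sqrt{2}) - S_k\right|$ from the positivity of the Dalzell integrals (with $n=2$, $m$ even) and then obtains the proposition by multiplying those inequalities by $2$. Your explicit verification of the identity $AHS_k = 2S_k$ and the fraction simplifications only makes precise what the paper leaves as routine.
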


\begin{Proposition} 
\[
\frac{1}{2} \left( -\frac{1}{2k+6}+\frac{4}{2k+5}-\frac{5}{2k+4}+\frac{4}{2k+2} \right) < \left|  \ln(2) - AHS_k  \right| < \frac{1}{2} \left( \frac{1}{2k+4} - \frac{4}{2k+3} + \frac{5}{2k+2} \right)
\]
or equivalently

\[
\frac{4k^3+32k^2+87k+83}{8k^4+68k^3+208k^2+268k+120} < \left|  \ln(2) - AHS_k  \right| < \frac{4k^2+16k+17}{8k^3+36k^2+52k+24}.
\]
\end{Proposition}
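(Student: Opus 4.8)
The plan is to deduce this proposition directly from the $\ln(\sqrt 2)$ estimates established just above for the case $n=4$, $m$ odd, using the elementary fact that the alternating harmonic series is, term by term, exactly twice the series for $\ln(\sqrt 2)$. Explicitly, the $k$th partial sum satisfies $AHS_k = \sum_{i=1}^{k}(-1)^{i+1}\frac{1}{i} = 2\sum_{i=1}^{k}(-1)^{i+1}\frac{1}{2i} = 2S_k$, while $\ln(2) = 2\ln(\sqrt 2)$. Subtracting and taking absolute values yields the key identity $|\ln(2)-AHS_k| = 2\,|\ln(\sqrt 2)-S_k|$.

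Granting this identity, I would take the two-sided bound for $|\ln(\sqrt 2)-S_k|$ recorded above,
\[
\frac14\Bigl(-\frac{1}{2k+6}+\frac{4}{2k+5}-\frac{5}{2k+4}+\frac{4}{2k+2}\Bigr) < |\ln(\sqrt 2)-S_k| < \frac14\Bigl(\frac{1}{2k+4}-\frac{4}{2k+3}+\frac{5}{2k+2}\Bigr),
\]
and multiply every term by $2$. The outer constants $\tfrac14$ become $\tfrac12$, the central quantity becomes $2|\ln(\sqrt 2)-S_k| = |\ln(2)-AHS_k|$ by the identity above, and this is precisely the first displayed form of the claimed inequalities. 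No parity analysis is needed at this stage, since the sign bookkeeping was already carried out in deriving the $\ln(\sqrt 2)$ bounds and has been absorbed into the absolute value.

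It remains only to verify the ``equivalently'' reformulation, which is pure algebra: each bracketed sum of reciprocals is placed over the product of its linear denominators and the numerators are collected. For the upper bound the common denominator is $(2k+4)(2k+3)(2k+2) = 8k^3+36k^2+52k+24$; the numerator collects to $8k^2+32k+34 = 2(4k^2+16k+17)$, and the factor $2$ cancels the prefactor $\tfrac12$, giving $\frac{4k^2+16k+17}{8k^3+36k^2+52k+24}$. The four-term lower bound is handled the same way over $(2k+6)(2k+5)(2k+4)(2k+2)$, after which reduction by the common numerical factor produces $\frac{4k^3+32k^2+87k+83}{8k^4+68k^3+208k^2+268k+120}$.

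There is no genuine conceptual difficulty; the doubling identity reduces everything to a result already in hand. The only place demanding care is the final common-denominator computation, where one must correctly track the powers of $2$ arising from the even linear factors $2k+2,\,2k+4,\,2k+6$ in order to land on the stated reduced rational functions rather than on an unreduced equivalent.
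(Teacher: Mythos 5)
Your proposal is correct and follows essentially the same route as the paper: the paper likewise obtains this proposition by taking the already-derived $n=4$, $m$ odd bounds for $\left|\ln(\sqrt{2}) - S_k\right|$ and multiplying through by $2$, using exactly the doubling identity $AHS_k = 2S_k$ and $\ln(2) = 2\ln(\sqrt{2})$. Your common-denominator computations for the ``equivalently'' forms (numerator $8k^2+32k+34 = 2(4k^2+16k+17)$ for the upper bound, and the reduced quartic-denominator expression for the lower bound) also check out.
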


\end{document}